\newtheorem{theorem}{Theorem}
\newtheorem{proposition}[theorem]{Proposition}
\newcommand{\n}[1]{\left\vert#1\right\vert}
\begin{document}
\title[]{On differential independence of $\mathbold{\zeta}$ and $\mathbold{\Gamma}$}

\author[]{Qi Han and Jingbo Liu}

\address{Department of Science and Mathematics, Texas A\&M University, San Antonio, Texas 78224, USA
\vskip 2pt Email: {\sf qhan@tamusa.edu (QH) \hspace{0.6mm} jliu@tamusa.edu (JL)}}
\thanks{{\sf 2010 Mathematics Subject Classification.} Primary 11M06, 33B15. Secondary 26B05, 30D30, 34M15.}
\thanks{{\sf Keywords.} Differential equations, the Euler gamma-function $\mathbold{\Gamma}(z)$, the Riemann zeta-function $\mathbold{\zeta}(z)$.}

\begin{abstract}
  In this note, we will prove that $\mathbold{\zeta}$ and $\mathbold{\Gamma}$ can not satisfy any differential equation generated through a family of functions continuous in $\mathbold{\zeta}$ with polynomials in $\mathbold{\Gamma}$.
\end{abstract}

\maketitle

It is a profound result of H\"{o}lder \cite{Ho} in 1887 that the Euler gamma-function
\begin{equation}
\mathbold{\Gamma}(z)=\int_0^{+\infty}t^{z-1}e^{-t}dt\nonumber
\end{equation}
can not satisfy any nontrivial algebraic differential equation whose coefficients are polynomials in $\mathbf{C}$.
That is, if $P(v_0,v_1,\ldots,v_n)$ is a polynomial of $n+1$ variables with polynomial coefficients in $z\in\mathbf{C}$ such that $P(\mathbold{\Gamma},\mathbold{\Gamma}',\ldots,\mathbold{\Gamma}^{(n)})(z)\equiv0$ for $z\in\mathbf{C}$, then necessarily $P\equiv0$.
Hilbert \cite{Hi1,Hi2}, in the lecture addressed before the International Congress of Mathematicians at Paris in 1900 for his famous 23 problems, stated in Problem 18 that the Riemann zeta-function
\begin{equation}
\mathbold{\zeta}(z)
\footnote{In this paper, we will use $\mathbold{\zeta}(z)$, instead of the common practice $\mathbold{\zeta}(s)$, to keep notations consistent.}
=\sum_{n=1}^{+\infty}\frac{1}{n^z}\nonumber
\end{equation}
can not satisfy any nontrivial algebraic differential equation whose coefficients are polynomials in $\mathbf{C}$.
This problem was solved by Mordukhai-Boltovskoi \cite{MB} and Ostrowski \cite{Os}, independently.
In addition, Voronin \cite{Vo2} proved, as a special case of his result, $\mathbold{\zeta}$ can not satisfy any nontrivial differential equation generated from a continuous function $F(u_0,u_1,\ldots,u_m)$ of $m+1$ variables with polynomial coefficients in $\mathbf{C}$, based on his celebrated result \cite{Vo1} as follows.

\begin{proposition}\label{Prop1}
Given $x\in\big(\frac{1}{2},1\big)$ for $z=x+\mathtt{i}y\in\mathbf{C}$, define
\begin{equation}
\gamma(y):=(\mathbold{\zeta}(x+\mathtt{i}y),\mathbold{\zeta}'(x+\mathtt{i}y),\ldots,\mathbold{\zeta}^{(m)}(x+\mathtt{i}y))\nonumber
\end{equation}
to be a curve in $y$.
Then, $\gamma(\mathbf{R})$ is everywhere dense in $\mathbf{C}^{m+1}$.
\end{proposition}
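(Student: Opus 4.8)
The plan is to follow Voronin's original argument \cite{Vo1}: combine the Euler product of $\mathbold{\zeta}$, Kronecker's equidistribution theorem, a second-moment estimate that becomes available exactly because $x>\tfrac12$, and a rearrangement theorem in a Hilbert space.

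\emph{Step 1: the exponential jet map.} Let $E\colon\mathbf{C}^{m+1}\to\mathbf{C}^{m+1}$ send the $m$-jet $(b_0,\ldots,b_m)$ at $x$ of a germ $g$ to the $m$-jet $\pr{e^{b_0},e^{b_0}b_1,e^{b_0}(b_1^2+b_2),\ldots}$ of $e^{g}$ (Fa\`a di Bruno). It is continuous, $b_0$ enters only through $e^{b_0}$, and its image is $\bre{w\in\mathbf{C}^{m+1}:w_0\neq0}$; since $\bre{w_0\neq0}$ is dense in $\mathbf{C}^{m+1}$, the $E$-image of any dense subset of $\mathbf{C}^{m+1}$ is dense in $\mathbf{C}^{m+1}$. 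Note too that $\gamma(y)$ is the $m$-jet at $x$ of the shifted function $\mathbold{\zeta}(\,\cdot\,+\mathtt{i}y)$, so $\gamma(y)=E(\text{$m$-jet at }x\text{ of }\log\mathbold{\zeta}(\,\cdot\,+\mathtt{i}y))$ whenever $\mathbold{\zeta}(x+\mathtt{i}y)\neq0$.

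\emph{Step 2: approximation by a truncated Euler product.} Put $\mathbf{T}:=\bre{\omega\in\mathbf{C}:\n{\omega}=1}$ and, for $N\geq2$, $\mathbold{\zeta}_N(s):=\prod_{p\leq N}(1-p^{-s})^{-1}$ — a Dirichlet polynomial, holomorphic and bounded away from $0$ near the line $\mathrm{Re}\,s=x$, with $\log\mathbold{\zeta}_N=\sum_{p\leq N}g_p^{1}$ where $g_p^{\omega}(s):=-\log(1-\omega p^{-s})$ for $\omega\in\mathbf{T}$. I shall use that for $x>\tfrac12$ and every $\varepsilon>0$ there is an $N$ with
\[
\limsup_{T\to\infty}\ \frac1T\int_0^T\sum_{j=0}^m\n{\mathbold{\zeta}^{(j)}(x+\mathtt{i}t)-\mathbold{\zeta}_N^{(j)}(x+\mathtt{i}t)}^2\,dt<\varepsilon ,
\]
a standard second-moment computation: the Dirichlet series of $\mathbold{\zeta}^{(j)}-\mathbold{\zeta}_N^{(j)}$ has coefficients supported on integers $n>N$, and the mean-value theorems for Dirichlet series give $\limsup_T\tfrac1T\int_0^T\n{\mathbold{\zeta}^{(j)}-\mathbold{\zeta}_N^{(j)}}^2dt=\sum_{n\text{ not }N\text{-smooth}}(\log n)^{2j}n^{-2x}$, the series over \emph{all} $n$ converging \emph{precisely} when $x>\tfrac12$, so its tail $\to0$ as $N\to\infty$.

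\emph{Step 3: Kronecker's theorem and the rearrangement lemma.} With $\omega_p:=p^{-\mathtt{i}t}$, the translate $s\mapsto\mathbold{\zeta}_N(s+\mathtt{i}t)$ equals $h_{\bar\omega}(s):=\prod_{p\leq N}(1-\omega_pp^{-s})^{-1}$, $\bar\omega=(\omega_p)_{p\leq N}$, and $h_{\bar\omega}=\exp\bigl(\sum_{p\leq N}g_p^{\omega_p}\bigr)$ is zero-free, so its $m$-jet at $x$ is $E\bigl(\text{$m$-jet at }x\text{ of }\sum_{p\leq N}g_p^{\omega_p}\bigr)$. Since $\bre{\log p:p\leq N}$ is linearly independent over $\mathbf{Q}$ (unique factorization), Weyl's criterion makes $\bre{(p^{-\mathtt{i}t})_{p\leq N}:t\in\mathbf{R}}$ uniformly distributed in the torus $\mathbf{T}^{\pi(N)}$, $\pi(N)=\#\bre{p\leq N}$; hence, for any $\bar\omega\in\mathbf{T}^{\pi(N)}$, the set of $t\in\mathbf{R}$ for which the $m$-jet of $\mathbold{\zeta}_N(\,\cdot\,+\mathtt{i}t)$ at $x$ is close to the $m$-jet of $h_{\bar\omega}$ at $x$ has positive lower density. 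The heart of the matter is to prove
\[
\overline{\ \bigcup_{N\geq2}\bre{\ \text{$m$-jet at }x\text{ of }{\textstyle\sum_{p\leq N}}g_p^{\omega_p}\ :\ \omega_p\in\mathbf{T}\ }\ }=\mathbf{C}^{m+1},
\]
so that, applying $E$, the $m$-jets of the $h_{\bar\omega}$ are dense in $\bre{w_0\neq0}$. Expanding $g_p^{\omega}(s)=\sum_{k\geq1}\omega^kp^{-ks}/k$ and differentiating, $(g_p^{\omega})^{(j)}(x)=\omega(-\log p)^jp^{-x}+r_{p,j}(\omega)$ with $\sum_p\sup_{\omega\in\mathbf{T}}\n{r_{p,j}(\omega)}<\infty$; so the $p$-th summand ranges over a compact circle which, to leading order, is $\bre{\omega\,p^{-x}u_p:\omega\in\mathbf{T}}$ with $u_p:=\pr{1,-\log p,\ldots,(-\log p)^m}$. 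Two facts about these building blocks are all that is needed: $\sum_pp^{-2x}\nm{u_p}^2<\infty$ since $x>\tfrac12$; and $\sum_pp^{-x}\n{\aip{u_p,e}}=\infty$ for every $e\in\mathbf{C}^{m+1}\setminus\bre{0}$, because $\aip{u_p,e}=P_e(-\log p)$ with $P_e(X):=\sum_{j}\overline{e_j}X^j$ a nonzero polynomial of degree $\leq m$, so $\aip{u_p,e}$ vanishes for at most $m$ primes while $\n{P_e(-\log p)}$ stays above a positive constant for all large $p$, whence the sum dominates $\sum_{p\text{ large}}p^{-x}=\infty$ (here $x<1$ enters). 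These are exactly the hypotheses of Pechersky's rearrangement theorem for Hilbert spaces, in the form used by Voronin — which absorbs the uniformly summable corrections $r_{p,j}$ — and it yields the displayed density; applied to the primes in $(N_0,N]$ with target $0$, the same lemma shows the union may moreover be restricted to $N\geq N_0$ for any prescribed $N_0$.

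\emph{Step 4: assembly, and the main obstacle.} Fix $(c_0,\ldots,c_m)$ with $c_0\neq0$ and $\varepsilon>0$; since $\bre{w_0\neq0}$ is dense, it is enough to find $t$ with $\gamma(t)$ within $\varepsilon$ of $(c_0,\ldots,c_m)$. By Step 3 choose $N$ — as large as we wish — and $\bar\omega\in\mathbf{T}^{\pi(N)}$ with the $m$-jet of $h_{\bar\omega}$ at $x$ within $\varepsilon/3$ of $(c_0,\ldots,c_m)$; then, by the Kronecker part of Step 3, the set $A:=\bre{t\geq0:\ \text{$m$-jet of }\mathbold{\zeta}_N(\,\cdot\,+\mathtt{i}t)\text{ at }x\text{ is within }2\varepsilon/3\text{ of }(c_0,\ldots,c_m)}$ has positive lower density $c$. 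Enlarging $N$ so that the $\limsup$ in the Step 2 bound is less than $c\,\varepsilon^2/10$, Markov's inequality bounds the upper density of $B:=\bre{t\geq0:\ \sum_{j=0}^m\n{\mathbold{\zeta}^{(j)}(x+\mathtt{i}t)-\mathbold{\zeta}_N^{(j)}(x+\mathtt{i}t)}^2\geq(\varepsilon/3)^2}$ by $9\varepsilon^{-2}\cdot c\varepsilon^2/10=9c/10$. Since $\underline{d}(A\setminus B)\geq\underline{d}(A)-\overline{d}(B)\geq c-9c/10>0$, the set $A\setminus B$ is unbounded, and any $t\in A\setminus B$ has $\gamma(t)=(\mathbold{\zeta}(x+\mathtt{i}t),\ldots,\mathbold{\zeta}^{(m)}(x+\mathtt{i}t))$ within $2\varepsilon/3+\varepsilon/3=\varepsilon$ of $(c_0,\ldots,c_m)$; hence $\gamma(\mathbf{R})$ is dense in $\mathbf{C}^{m+1}$. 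The main obstacle is Step 3: on the one hand reconciling the merely \emph{conditional} convergence of the formal Euler expansion of $\log\mathbold{\zeta}$ inside the strip with the genuine analytic function — the reason Step 2 is set up for $\mathbold{\zeta}$ and its derivatives rather than for $\log\mathbold{\zeta}$, whose singularities at the zeros of $\mathbold{\zeta}$ would otherwise intrude — and on the other verifying the ``divergence in every direction'' condition that powers the rearrangement theorem, i.e.\ that the phases $p^{-\mathtt{i}t}$, stirred by Kronecker's theorem, really do let the finite Euler products sweep out a dense set of $m$-jets.
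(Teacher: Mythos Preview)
The paper does not prove Proposition~\ref{Prop1}; it merely quotes it as Voronin's result \cite{Vo1} and invokes it as a black box inside the proof of Theorem~\ref{Thm1}. There is therefore no ``paper's own proof'' to compare against. What you have written is, in outline, Voronin's original argument --- Euler product, Weyl--Kronecker equidistribution on $\mathbf{T}^{\pi(N)}$, a conditional-convergence (rearrangement) lemma in the Hilbert space $\mathbf{C}^{m+1}$, and a mean-square comparison of $\mathbold{\zeta}$ with $\mathbold{\zeta}_N$ --- and your verification of the two hypotheses of the rearrangement lemma (square-summability of $p^{-x}u_p$ from $x>\tfrac12$; divergence of $\sum_p p^{-x}\n{\aip{u_p,e}}$ in every direction $e\neq0$ from $x<1$) is correct.

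There is one genuine loose end, and it is in Step~4 rather than in the Step~3 you flag as the main obstacle. You fix $N$ and $\bar\omega$, record the lower density $c$ of the set $A$, and then write ``enlarging $N$ so that the $\limsup$ in the Step~2 bound is less than $c\,\varepsilon^2/10$''. But enlarging $N$ changes $\mathbold{\zeta}_N$, hence changes $A$, hence changes $c$; the choice of $N$ is chasing its own tail, and nothing you have said prevents $c=c(N)$ from shrinking faster than the Step~2 tail as $N\to\infty$. The standard way to untangle this is to pass to the infinite torus $\Omega=\prod_p\mathbf{T}$ with product Haar measure: one shows (using exactly your second-moment estimate together with a tightness/weak-convergence argument) that the empirical distribution of the jet $\gamma(t)$ on $[0,T]$ converges, as $T\to\infty$, to the law of the jet of the random Euler product $\prod_p(1-\omega_pp^{-s})^{-1}$, and then uses your Step~3 to identify the support of this limit law as all of $\bre{w_0\neq0}$. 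That decouples the positivity constant from $N$. All of your ingredients are right; only the order of quantifiers in the assembly needs this repair.
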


On the other hand, $\mathbold{\zeta}$ and $\mathbold{\Gamma}$ are related by the Riemann functional equation
\begin{equation}
\mathbold{\zeta}(1-z)=2^{1-z}\pi^{-z}\cos\Big(\frac{1}{2}\pi z\Big)\mathbold{\Gamma}(z)\mathbold{\zeta}(z).\nonumber
\end{equation}
In 2007, Markus \cite{Ma} showed that $\mathbold{\zeta}(\sin(2\pi z))$ can not satisfy any nontrivial algebraic differential equation whose coefficients are polynomials in $\mathbold{\Gamma}$ and its derivatives, and he conjectured $\mathbold{\zeta}$ itself can not satisfy any nontrivial algebraic differential equation whose coefficients are polynomials in $\mathbold{\Gamma}$ and its derivatives, either.
As such, one is now interested in knowing whether there exists a nontrivial polynomial $\mathcal{P}(u_0,u_1,\ldots,u_m;v_0,v_1,\ldots,v_n)$ such that, for $z\in\mathbf{C}$,
\begin{equation}\label{Eq01}
\mathcal{P}(\mathbold{\zeta},\mathbold{\zeta}',\ldots,\mathbold{\zeta}^{(m)};\mathbold{\Gamma},\mathbold{\Gamma}',\ldots,\mathbold{\Gamma}^{(n)})(z)\equiv0.
\end{equation}
Important partial results of \eqref{Eq01} can be found in Li and Ye \cite{LY1,LY2} for $n=1,2$.

In this note, we manipulate the key ideas involved in \cite{Ho,Vo2,LY2} and consider some differential equations generated through a family $\mathscr{F}$ of functions $\mathcal{F}(u_0,u_1,\ldots,u_m;v_0,v_1,\ldots,v_n)$ which are continuous in $\vec{u}:=(u_0,u_1,\ldots,u_m)$ with polynomial coefficients in $\vec{v}:=(v_0,v_1,\ldots,v_n)$.

As a matter of fact, we define
\begin{equation}
\Lambda:=\left\{\lambda:=(\lambda_0,\lambda_1,\ldots,\lambda_n):\lambda_0,\lambda_1,\ldots,\lambda_n~\text{are nonnegative integers}\right\}\nonumber
\end{equation}
to be an index set having a finite cardinality, and accordingly write
\begin{equation}
\begin{aligned}
&\Lambda_p:=\left\{\lambda\in\Lambda:\n{\lambda}:=\lambda_0+\lambda_1+\cdots+\lambda_n=p\right\}~\text{and}\\
&\Lambda^\star_q:=\left\{\lambda\in\Lambda:\n{\lambda}^\star:=\lambda_1+n\lambda_n=q\right\}.\nonumber
\end{aligned}
\end{equation}
Let $\mathcal{F}(u_0,u_1,\ldots,u_m;v_0,v_1,\ldots,v_n)$ be a function of $m+n+2$ variables such that
\begin{equation}
\mathcal{F}(u_0,u_1,\ldots,u_m;v_0,v_1,\ldots,v_n)=\sum_{\lambda\in\Lambda}F_\lambda(u_0,u_1,\ldots,u_m)v_0^{\lambda_0}v_1^{\lambda_1}\cdots v_n^{\lambda_n}.\nonumber
\end{equation}
Here, the $F_\lambda(u_0,u_1,\ldots,u_m)$ are continuous functions in $\mathbf{C}^{m+1}$.
Moreover, for fixed $\n{\lambda}$, the set $\left\{\lambda_2,\ldots,\lambda_{n-1}\right\}$ depends only on $\n{\lambda}^\star$ (that is, $\lambda_2,\ldots,\lambda_{n-1}$ are fixed provided $\n{\lambda}^\star$ is; note that, though, one does have extensive latitude in $\lambda_1$ and $\lambda_n$ to a certain degree) and
\begin{equation}\label{Eq02}
\chi(\n{\lambda}^\star):=2\lambda_2+3\lambda_3+\cdots+(n-1)\lambda_{n-1},
\end{equation}
as a function of $\n{\lambda}^\star$, is required be nondecreasing in $\n{\lambda}^\star$.
Denote by $\mathscr{F}$ the family of functions $\mathcal{F}(u_0,u_1,\ldots,u_m;v_0,v_1,\ldots,v_n)$ that satisfy the preceding properties.

Then, the main result of this paper can be formulated as follows.

\begin{theorem}\label{Thm1}
Assume $\mathcal{F}(u_0,u_1,\ldots,u_m;v_0,v_1,\ldots,v_n)$ is a function of $m+n+2$ variables that belongs to the family $\mathscr{F}$.
If, for $z\in\mathbf{C}$,
\begin{equation}\label{Eq03}
\mathcal{F}(\mathbold{\zeta},\mathbold{\zeta}',\ldots,\mathbold{\zeta}^{(m)};\mathbold{\Gamma},\mathbold{\Gamma}',\ldots,\mathbold{\Gamma}^{(n)})(z)\equiv0,
\end{equation}
then the function $\mathcal{F}$ vanishes identically.
\end{theorem}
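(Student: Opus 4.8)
The plan is to argue by contradiction, feeding Voronin's recurrence theorem (Proposition~\ref{Prop1}) into an asymptotic analysis of the $\mathbold{\Gamma}$-monomials along a vertical line; we may and do assume $n\ge 2$, the cases $n\le 1$ being handled by the same, simpler, scheme. Suppose $\mathcal{F}\not\equiv 0$, put $S:=\bre{\lambda\in\Lambda:F_\lambda\not\equiv 0\ \text{on}\ \mathbf{C}^{m+1}}\neq\varnothing$ and $p_0:=\min\bre{\n{\lambda}:\lambda\in S}$. Fix $x\in\big(\frac{1}{2},1\big)$, write $z=x+\mathtt{i}y$; since $\mathbold{\Gamma}$ is holomorphic and zero-free on this line, dividing \eqref{Eq03} by $\mathbold{\Gamma}^{p_0}(z)$ gives, for every $y\in\mathbf{R}$,
\[
\sum_{\lambda\in S}F_\lambda(\gamma(y))\,\Phi_\lambda(y)\equiv 0,\qquad \Phi_\lambda:=\mathbold{\Gamma}^{\n{\lambda}-p_0}\prod_{k=0}^{n}Y_k^{\lambda_k},\quad Y_k:=\mathbold{\Gamma}^{(k)}/\mathbold{\Gamma},
\]
where $\gamma$ is the curve of Proposition~\ref{Prop1} (so $Y_0=1$, $Y_1=\psi:=\mathbold{\Gamma}'/\mathbold{\Gamma}$). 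As $y\to+\infty$: $\n{\mathbold{\Gamma}(z)}$ decays faster than any power of $y$ (Stirling); $\psi(z)=\log z+O(1/z)$ and $\psi^{(j)}(z)=(-1)^{j-1}(j-1)!\,z^{-j}\bigl(1+O(1/z)\bigr)$ for $j\ge 1$; and, using $Y_{k+1}=Y_k'+\psi Y_k$, each $Y_k$ is a polynomial in $\psi,\psi',\dots,\psi^{(k-1)}$ with $Y_k=\psi^k+\binom{k}{2}\psi^{k-2}\psi'+(\text{lower-order})$. Assigning to a monomial $\psi^{e_0}\prod_{l\ge 1}(\psi^{(l)})^{e_l}$ the magnitude $(\log y)^{e_0}y^{-w}$, $w:=\sum_l l\,e_l$, one checks that every weight-$w$ monomial ($w\ge 1$) in $Y_k$ has $\psi$-exponent $\le k-w-1$; in particular the weight-$0$ part of $\prod_{k\ge 1}Y_k^{\lambda_k}$ equals $\psi^{\sum_k k\lambda_k}=\psi^{\,\n{\lambda}^\star+\chi(\n{\lambda}^\star)}$.

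Now the two structural hypotheses on $\mathscr{F}$ enter. Since $\chi$ is nondecreasing in $\n{\lambda}^\star$, the map $q\mapsto q+\chi(q)$ is strictly increasing, so for $\lambda\in S\cap\Lambda_{p_0}$ the integer $a(\lambda):=\n{\lambda}^\star+\chi(\n{\lambda}^\star)$ determines $\n{\lambda}^\star$, hence (for fixed $\n{\lambda}=p_0$) also $(\lambda_2,\dots,\lambda_{n-1})$. Let $a_0>a_1>\dots>a_J\ge 0$ be the values of $a(\lambda)$ realized on $S\cap\Lambda_{p_0}$ and $S^{(i)}:=\bre{\lambda\in S\cap\Lambda_{p_0}:a(\lambda)=a_i}$; on $S^{(i)}$ the tuple $(\lambda_2,\dots,\lambda_{n-1})$ and $q^{(i)}:=\n{\lambda}^\star$ are constant, $\lambda_1=q^{(i)}-n\lambda_n$, and $\lambda_0$ is an affine function of $\lambda_n$, so $S^{(i)}$ is parametrized by $t:=\lambda_n$ over a finite set $T_i\subset\mathbf{Z}_{\ge 0}$; denote by $\lambda_i(t)$ the corresponding index. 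Writing $A_i:=\prod_{k=2}^{n-1}Y_k^{\lambda_k}$ (constant on $S^{(i)}$) and $B:=Y_n/\psi^{n}$, we get $\Phi_{\lambda_i(t)}=A_i\,\psi^{q^{(i)}}B^{t}$ on $\Lambda_{p_0}$, where $A_i\psi^{q^{(i)}}=\psi^{a_i}+O(\text{weight}\ge 1)$ and $\beta:=B-1=\binom{n}{2}\psi^{-2}\psi'+O(\text{weight}\ge 2)$, hence $\beta^{r}=\binom{n}{2}^{r}\psi^{-2r}(\psi')^{r}+O(\text{weight}\ge r+1)$. Expanding $B^{t}=\sum_{r\ge 0}\binom{t}{r}\beta^{r}$ and lumping the (exponentially small) terms with $\n{\lambda}>p_0$ into a remainder $R$, the identity above becomes the finite relation
\[
\sum_{i=0}^{J}A_i\psi^{q^{(i)}}\sum_{r\ge 0}\beta^{r}\,G_{i,r}(\gamma(y))=R(y),\qquad G_{i,r}:=\sum_{t\in T_i}\binom{t}{r}F_{\lambda_i(t)},
\]
the $G_{i,r}$ being continuous on $\mathbf{C}^{m+1}$ (and $G_{i,r}\equiv 0$ automatically once $r>\max T_i$).

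The core step is to prove $G_{i,r}\equiv 0$ for all relevant $(i,r)$ by induction along the lexicographic order on $(r,i)$. At the pair $(i_0,r_0)$, all terms of the displayed relation with strictly smaller $(r,i)$ vanish identically by the inductive hypothesis; divide what remains by $\psi(z)^{a_{i_0}-2r_0}\bigl(\psi'(z)\bigr)^{r_0}$ (nonzero on the line for $y$ large). By the magnitude bookkeeping of the first paragraph, the coefficient of $G_{i_0,r_0}(\gamma(y))$ in the divided relation tends to $\binom{n}{2}^{r_0}$ as $y\to+\infty$, while the coefficient of every remaining $G_{i,r}(\gamma(y))$, as well as the divided remainder, tends to $0$. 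Choosing, via Proposition~\ref{Prop1}, a sequence $y_k\to+\infty$ with $\gamma(y_k)\to\vec{a}$ for an arbitrarily prescribed $\vec{a}\in\mathbf{C}^{m+1}$, and invoking continuity of the $F_\lambda$ (so that $G_{i,r}(\gamma(y_k))\to G_{i,r}(\vec{a})$), we obtain $\binom{n}{2}^{r_0}G_{i_0,r_0}(\vec{a})=0$, i.e.\ $G_{i_0,r_0}\equiv 0$. Once this holds for all $(i,r)$, the relations $\sum_{t\in T_i}\binom{t}{r}F_{\lambda_i(t)}\equiv 0$ ($0\le r<\n{T_i}$) form an invertible Vandermonde-type linear system in the distinct integers $t\in T_i$, forcing $F_{\lambda_i(t)}\equiv 0$ for all $t\in T_i$ and all $i$; thus $F_\lambda\equiv 0$ for every $\lambda\in S\cap\Lambda_{p_0}$, contradicting the choice of $p_0$. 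Hence $S=\varnothing$, which is the assertion.

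I expect the delicate point to be exactly this last induction: one must verify that after dividing by $\psi^{a_{i_0}-2r_0}(\psi')^{r_0}$ precisely one term survives in the limit. This forces one to control the $(\log y)^{e_0}y^{-w}$ magnitude of \emph{every} monomial appearing in \emph{every} $Y_k$ (and in the products $A_i$ and $\beta^r$), and to check that the lexicographic $(r,i)$ order on the principal parts matches order of magnitude on the line $\mathrm{Re}\,z=x$. It is here that both defining properties of $\mathscr{F}$ are used: the monotonicity of $\chi$ makes $q\mapsto q+\chi(q)$ strictly increasing (so that the secondary sorting key $a_i$ is well adapted), and the fact that $(\lambda_2,\dots,\lambda_{n-1})$ is a function of $\n{\lambda}^\star$ lets $A_i$ factor out, leaving the single free parameter $t=\lambda_n$ whose contribution is the clean binomial series $B^{t}=\sum_r\binom{t}{r}\beta^{r}$.
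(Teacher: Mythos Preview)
Your proposal is correct and follows essentially the same route as the paper: divide by $\mathbold{\Gamma}^{p_0}$, rewrite the $\mathbold{\Gamma}$-monomials via $\mathbold{\psi}=\mathbold{\Gamma}'/\mathbold{\Gamma}$ and its derivatives, sort the resulting terms by the pair $(r,\text{value of }q+\chi(q))$, and combine the vertical-line asymptotics for $\mathbold{\psi}^{(j)}$ with Voronin's density to peel off the coefficients one at a time. The only organizational differences are that you fold the paper's two stages (first $\mathcal{F}_p(\vec{\mathbold{\zeta}};\vec{\mathbold{\Gamma}})\equiv 0$, then $\mathcal{F}_p\equiv 0$) into a single contradiction, and that you invoke Proposition~\ref{Prop1} by sending $\gamma(y_k)\to\vec{a}$ rather than by localizing to a small set $\mathbf{\Omega}$; both choices are cosmetic and the final linear-algebra step (your binomial ``Vandermonde-type'' system) is exactly the paper's triangular relations \eqref{Eq11}.
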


\begin{proof}
Since $\Lambda$ has a finite cardinality, one sees, for a nonnegative integer $L_\Lambda$,
\begin{equation}
\mathcal{F}(u_0,u_1,\ldots,u_m;v_0,v_1,\ldots,v_n)=\sum_{p=0}^{L_\Lambda}\sum_{\lambda\in\Lambda_p}F_\lambda(\vec{u})v_0^{\lambda_0}v_1^{\lambda_1}\cdots v_n^{\lambda_n}.\nonumber
\end{equation}
Here, and hereafter, $F_\lambda(\vec{u})$ represents the function $F_\lambda(u_0,u_1,\ldots,u_m)$ to save space.
Write, for each $p=0,1,\ldots,L_\Lambda$, the associated homogeneous polynomial in $\vec{v}$ of degree $p$ to be
\begin{equation}
\mathcal{F}_p(u_0,u_1,\ldots,u_m;v_0,v_1,\ldots,v_n):=\sum_{\lambda\in\Lambda_p}F_\lambda(\vec{u})v_0^{\lambda_0}v_1^{\lambda_1}\cdots v_n^{\lambda_n}.\nonumber
\end{equation}
Rearranging $\mathcal{F}_p(u_0,u_1,\ldots,u_m;v_0,v_1,\ldots,v_n)$ in $\vec{v}$ in the ascending order of $q=\n{\lambda}^\star$, together with the lexicographical order, yields a nonnegative integer $M_p$ with
\begin{equation}\label{Eq04}
\mathcal{F}_p(u_0,u_1,\ldots,u_m;v_0,v_1,\ldots,v_n)
=\sum_{q=0}^{M_p}\sum_{\lambda\in\Lambda_p\cap\Lambda^\star_q}F_\lambda(\vec{u})v_0^{\lambda_0}v_1^{\lambda_1}\cdots v_n^{\lambda_n},
\end{equation}
so that
\begin{equation}\label{Eq05}
\mathcal{F}(u_0,u_1,\ldots,u_m;v_0,v_1,\ldots,v_n)
=\sum_{p=0}^{L_\Lambda}\sum_{q=0}^{M_p}\sum_{\lambda\in\Lambda_p\cap\Lambda^\star_q}F_\lambda(\vec{u})v_0^{\lambda_0}v_1^{\lambda_1}\cdots v_n^{\lambda_n}.
\end{equation}

In view of \eqref{Eq03}, for each $p=0,1,\ldots,L_\Lambda$ and all $z\in\mathbf{C}$, we claim that
\begin{equation}\label{Eq06}
\mathcal{F}_p(\mathbold{\zeta},\mathbold{\zeta}',\ldots,\mathbold{\zeta}^{(m)};\mathbold{\Gamma},\mathbold{\Gamma}',\ldots,\mathbold{\Gamma}^{(n)})(z)\equiv0.
\end{equation}

Suppose, on the contrary, $p_0$ is the least index among $\left\{0,1,\ldots,L_\Lambda\right\}$ that violates \eqref{Eq06}.
Then, we have
\begin{equation}\label{Eq07}
\begin{aligned}
&\mathcal{F}_{p_0}\Big(\mathbold{\zeta},\mathbold{\zeta}',\ldots,\mathbold{\zeta}^{(m)};
1,\frac{\mathbold{\Gamma}'}{\mathbold{\Gamma}},\ldots,\frac{\mathbold{\Gamma}^{(n)}}{\mathbold{\Gamma}}\Big)(z)=\\
&\frac{\mathcal{F}_{p_0}(\mathbold{\zeta},\mathbold{\zeta}',\ldots,\mathbold{\zeta}^{(m)};
\mathbold{\Gamma},\mathbold{\Gamma}',\ldots,\mathbold{\Gamma}^{(n)})(z)}{\mathbold{\Gamma}^{p_0}(z)}\not\equiv0.
\end{aligned}
\end{equation}

Set the digamma function $\mathbold{\psi}:=\frac{\mathbold{\Gamma}'}{\mathbold{\Gamma}}$, and deduce inductively $\frac{\mathbold{\Gamma}^{(j)}}{\mathbold{\Gamma}}=\mathbold{\psi}^j\big[1+(c_j+\varepsilon_j)\frac{\mathbold{\psi}'}{\mathbold{\psi}^2}\big]$ with $c_j=c_{j-1}+j-1=\frac{j(j-1)}{2}$ and $\varepsilon_j=\varepsilon_{j-1}+\frac{\varepsilon_{j-1}'}{\mathbold{\psi}}
+(c_{j-1}+\varepsilon_{j-1})\big[(j-3)\frac{\mathbold{\psi}'}{\mathbold{\psi}^2}+\frac{\mathbold{\psi}''}{\mathbold{\psi}\mathbold{\psi}'}\big]$ for $c_0=c_1=0$ and $\varepsilon_0=\varepsilon_1=0$.
See Han, Liu and Wang \cite{HLW} for the detailed calculations.

By virtue of \eqref{Eq02}, \eqref{Eq04} and \eqref{Eq07}, we get, for $h_j:=c_j+\varepsilon_j$ and $z\in\mathbf{C}$,
\begin{equation}\label{Eq08}
\begin{aligned}
&\mathcal{F}_{p_0}\Big(\mathbold{\zeta},\mathbold{\zeta}',\ldots,\mathbold{\zeta}^{(m)};
1,\frac{\mathbold{\Gamma}'}{\mathbold{\Gamma}},\ldots,\frac{\mathbold{\Gamma}^{(n)}}{\mathbold{\Gamma}}\Big)(z)\\
=&\sum_{q=0}^{M_{p_0}}\sum_{\lambda\in\Lambda_{p_0}\cap\Lambda^\star_q}F_\lambda(\vec{\mathbold{\zeta}}(z))
\Big(\frac{\mathbold{\Gamma}'}{\mathbold{\Gamma}}\Big)^{\lambda_1}(z)\prod^{n-1}_{j=2}\Big(\frac{\mathbold{\Gamma}^{(j)}}{\mathbold{\Gamma}}\Big)^{\lambda_j}(z)
\Big(\frac{\mathbold{\Gamma}^{(n)}}{\mathbold{\Gamma}}\Big)^{\lambda_n}(z)\\
=&\sum_{q=0}^{M_{p_0}}\mathbold{\psi}^{q+\chi(q)}(z)\prod^{n-1}_{j=2}\Big(1+h_j\frac{\mathbold{\psi}'}{\mathbold{\psi}^2}\Big)^{\lambda_j}(z)
\sum_{\lambda\in\Lambda_{p_0}\cap\Lambda^\star_q}F_\lambda(\vec{\mathbold{\zeta}}(z))\Big(1+h_n\frac{\mathbold{\psi}'}{\mathbold{\psi}^2}\Big)^{\lambda_n}(z),
\end{aligned}
\end{equation}
with $\vec{\mathbold{\zeta}}(z)$ henceforth being the abbreviation of the vector function $(\mathbold{\zeta},\mathbold{\zeta}',\ldots,\mathbold{\zeta}^{(m)})(z)$.

Note $\lambda_2,\ldots,\lambda_{n-1}$ depend only on $p_0,q$.
For fixed $p_0,q$, $\lambda$ is uniquely determined by $\lambda_n$, and vice versa.
Denote the largest $\lambda_n$ in $\Lambda_{p_0}$ by $N_{p_0}$.
Then, \eqref{Eq08} can be rewritten as
\begin{equation}\label{Eq09}
\begin{aligned}
&\mathcal{F}_{p_0}\Big(\mathbold{\zeta},\mathbold{\zeta}',\ldots,\mathbold{\zeta}^{(m)};
1,\frac{\mathbold{\Gamma}'}{\mathbold{\Gamma}},\ldots,\frac{\mathbold{\Gamma}^{(n)}}{\mathbold{\Gamma}}\Big)(z)\\
=&\sum_{q=0}^{M_{p_0}}\mathbold{\Pi}_q(z)\mathbold{\psi}^{q+\chi(q)}(z)
\sum^{N_{p_0}}_{r=0}F_{q,r}(\vec{\mathbold{\zeta}}(z))\Big(1+h_n\frac{\mathbold{\psi}'}{\mathbold{\psi}^2}\Big)^r(z).
\end{aligned}
\end{equation}
Here, $F_{q,r}(\vec{u}):=F_\lambda(\vec{u})$ for some $\lambda\in\Lambda_{p_0}$ if it exists while $F_{q,r}(\vec{u}):=0$ otherwise, and $\mathbold{\Pi}_q:=\prod^{n-1}\limits_{j=2}\big(1+h_j\frac{\mathbold{\psi}'}{\mathbold{\psi}^2}\big)^{\lambda_j}$.
We set $H:=h_n\frac{\mathbold{\psi}'}{\mathbold{\psi}^2}$ and expand $(1+H)^r$ to observe
\begin{equation}\label{Eq10}
\sum^{N_{p_0}}_{r=0}F_{q,r}(\vec{\mathbold{\zeta}}(z))(1+H)^r(z)=\sum_{r=0}^{N_{p_0}}G_{q,r}(\vec{\mathbold{\zeta}}(z))H^r(z).
\end{equation}
Here, for fixed $p_0,q$, the functions $F_{q,r}(\vec{u}),G_{q,r}(\vec{u})$ satisfy the relations
\begin{equation}\label{Eq11}
\begin{aligned}
&G_{q,N_{p_0}}(\vec{u})=F_{q,N_{p_0}}(\vec{u}),\\
&G_{q,N_{p_0}-1}(\vec{u})=F_{q,N_{p_0}-1}(\vec{u})+C^{N_{p_0}-1}_{N_{p_0}}F_{q,N_{p_0}}(\vec{u}),\\
&\ldots\\
&G_{q,0}(\vec{u})=F_{q,0}(\vec{u})+F_{q,1}(\vec{u})+\cdots+F_{q,N_{p_0}}(\vec{u}),
\end{aligned}
\end{equation}
so that $\left\{F_{q,r}(\vec{u})\right\}$ and $\left\{G_{q,r}(\vec{u})\right\}$ are mutually uniquely representable of each other.

Summarizing all the preceding discussions \eqref{Eq08}, \eqref{Eq09} and \eqref{Eq10} leads to
\begin{equation}\label{Eq12}
\begin{aligned}
&\mathcal{F}_{p_0}\Big(\mathbold{\zeta},\mathbold{\zeta}',\ldots,\mathbold{\zeta}^{(m)};
1,\frac{\mathbold{\Gamma}'}{\mathbold{\Gamma}},\ldots,\frac{\mathbold{\Gamma}^{(n)}}{\mathbold{\Gamma}}\Big)(z)\\
=&\sum_{q=0}^{M_{p_0}}\mathbold{\Pi}_q(z)\mathbold{\psi}^{q+\chi(q)}(z)\sum_{r=0}^{N_{p_0}}G_{q,r}(\vec{\mathbold{\zeta}}(z))H^r(z).
\end{aligned}
\end{equation}

Recall \eqref{Eq07}, and notice the $G_{q,r}(\vec{u})$ are continuous functions over $\mathbf{C}^{m+1}$.
Let $G_{q_0,r_0}(\vec{u})$ be the first nonzero term in the ordered sequence of functions as follows
\begin{equation}
\begin{aligned}
&G_{M_{p_0},0}(\vec{u}),G_{M_{p_0}-1,0}(\vec{u}),\ldots,G_{0,0}(\vec{u}),G_{M_{p_0},1}(\vec{u}),\ldots,G_{0,1}(\vec{u}),\ldots,\\
&G_{M_{p_0},N_{p_0}-1}(\vec{u}),\ldots,G_{0,N_{p_0}-1}(\vec{u}),G_{M_{p_0},N_{p_0}}(\vec{u}),\ldots,G_{1,N_{p_0}}(\vec{u}),G_{0,N_{p_0}}(\vec{u}).\nonumber
\end{aligned}
\end{equation}
In view of the finiteness of indices, one certainly can find a constant $C_0>1$ and a sufficiently small subset $\mathbf{\Omega}$ of $\mathbf{C}^{m+1}$ such that, after appropriate rescaling if necessary, $\n{G_{q_0,r_0}(\vec{u})}\geq1$ and $\n{G_{q,r}(\vec{u})}\leq C_0$ uniformly for all $\vec{u}\in\mathbf{\Omega}\varsubsetneq\mathbf{C}^{m+1}$ and for every $0\leq q\leq M_{p_0}$ and $0\leq r\leq N_{p_0}$.
Then, by Proposition \ref{Prop1}, there exists a sequence of real numbers $\left\{y_k\right\}_{k=1}^{+\infty}$ with $\n{y_k}\to+\infty$ such that $\gamma(y_k)\in\mathbf{\Omega}\varsubsetneq\mathbf{C}^{m+1}$ when $x=\frac{3}{4}$.
So, for $z_k:=\frac{3}{4}+\mathtt{i}y_k\in\mathbf{C}$, one has
\begin{equation}\label{Eq13}
\n{G_{q_0,r_0}(\vec{\mathbold{\zeta}}(z_k))}\geq1~\text{and}~\n{G_{q,r}(\vec{\mathbold{\zeta}}(z_k))}\leq C_0
\end{equation}
uniformly for all indices $q=0,1,\ldots,M_{p_0}$ and $r=0,1,\ldots,N_{p_0}$.

Next, the classical result of Stirling \cite[p.151]{Ti} states that
\begin{equation}
\log\mathbold{\Gamma}(z)=\Big(z-\frac{1}{2}\Big)\log z-z+\frac{1}{2}\log(2\pi)+\int_0^{+\infty}\frac{[t]-t+\frac{1}{2}}{t+z}dt,\nonumber
\end{equation}
which combined with a version of the Lebesgue's convergence theorem leads to
\begin{equation}
\mathbold{\psi}(z)=\frac{\mathbold{\Gamma}'}{\mathbold{\Gamma}}(z)=\log z-\frac{1}{2z}-\int_0^{+\infty}\frac{[t]-t+\frac{1}{2}}{(t+z)^2}dt.\nonumber
\end{equation}
By virtue of a routine differentiation procedure, for all $j=1,2,\ldots$, one derives
\begin{equation}
\mathbold{\psi}(z)=\log z(1+o(1))~\text{and}~\mathbold{\psi}^{(j)}(z)=\frac{(-1)^{j-1}(j-1)!}{z^j}(1+o(1)),\nonumber
\end{equation}
and thus
\begin{equation}\label{Eq14}
\frac{\mathbold{\psi}'}{\mathbold{\psi}^2}(z)=\frac{1}{z(\log z)^2}(1+o(1))~\text{and}~
\frac{\mathbold{\psi}''}{\mathbold{\psi}\mathbold{\psi}'}(z)=-\frac{1}{z\log z}(1+o(1)),
\end{equation}
uniformly on $\mathbf{D}:=\left\{z:-\frac{\pi}{6}\leq\arg z\leq\frac{\pi}{6}\right\}$, provided $\n{z}$ is sufficiently large.
Notice \eqref{Eq14} implies $\varepsilon_j=-\frac{j(j-1)(j-2)}{6}\frac{1}{z\log z}(1+o(1))$, so that, for an $\bm{o}(1)$ depending only on $p_0,q$,
\begin{equation}\label{Eq15}
\mathbold{\Pi}_q(z)=\prod^{n-1}\limits_{j=2}\Big(1+h_j\frac{\mathbold{\psi}'}{\mathbold{\psi}^2}\Big)^{\lambda_j}(z)=(1+\bm{o}(1))
\end{equation}
uniformly on $\mathbf{D}$ for sufficiently large $\n{z}$.
See \cite{HLW} for the detailed computations.

From now on, we shall focus entirely on $\left\{z_k\right\}^{+\infty}_{k=1}\varsubsetneq\mathbf{D}$ and observe that
\begin{equation}
\mathbold{\Pi}_q(z_k)\mathbold{\psi}^{q+\chi(q)}(z_k)G_{q,r}(\vec{\mathbold{\zeta}}(z_k))H^r(z_k)\nonumber
\end{equation}
is equal to, in view of $H(z_k)=h_n(z_k)\frac{\mathbold{\psi}'}{\mathbold{\psi}^2}(z_k)=\frac{n(n-1)}{2z_k(\log z_k)^2}(1+o(1))$ and \eqref{Eq15},
\begin{equation}
\Big[\frac{n(n-1)}{2}\Big]^rG_{q,r}(\vec{\mathbold{\zeta}}(z_k))\frac{(\log z_k)^{q+\chi(q)-2r}}{(z_k)^r}(1+\bm{o}(1))\nonumber
\end{equation}
when $k\to+\infty$.
Here, the indices either satisfy $r=r_0$ with $0\leq q\leq q_0$ or satisfy $r_0<r\leq N_{p_0}$ with $0\leq q\leq M_{p_0}$.
As a result, the term
\begin{equation}
\mathbold{\Pi}_{q_0}(z_k)\mathbold{\psi}^{q_0+\chi(q_0)}(z_k)G_{q_0,r_0}(\vec{\mathbold{\zeta}}(z_k))H^{r_0}(z_k),\nonumber
\end{equation}
among all the possible terms appearing in \eqref{Eq12}, dominates in growth when $k\to+\infty$.
Actually, seeing \eqref{Eq02}, for sufficiently large $k$, if $r=r_0$ with $0\leq q<q_0$, one derives
\begin{equation}
\frac{\n{\log z_k}^{q+\chi(q)}}{\n{z_k}^{r_0}\n{\log z_k}^{2r_0}}\ll\frac{\n{\log z_k}^{q_0+\chi(q_0)}}{\n{z_k}^{r_0}\n{\log z_k}^{2r_0}},\nonumber
\end{equation}
while if $r_0<r\leq N_{p_0}$ with $0\leq q\leq M_{p_0}$, one instead derives
\begin{equation}
\begin{aligned}
&\n{\log z_k}^{q+\chi(q)}\leq\n{\log z_k}^{M_{p_0}+\chi(M_{p_0})+q_0+\chi(q_0)}\\
&\n{z_k}^r\n{\log z_k}^{2r}\gg\n{z_k}^{r_0}\n{\log z_k}^{M_{p_0}+\chi(M_{p_0})+2r_0}.\nonumber
\end{aligned}
\end{equation}
Therefore, for sufficiently large $k$, one concludes through \eqref{Eq07}, \eqref{Eq12} and \eqref{Eq13} that
\begin{equation}\label{Eq16}
\n{\mathcal{F}_{p_0}\Big(\mathbold{\zeta},\mathbold{\zeta}',\ldots,\mathbold{\zeta}^{(m)};
1,\frac{\mathbold{\Gamma}'}{\mathbold{\Gamma}},\ldots,\frac{\mathbold{\Gamma}^{(n)}}{\mathbold{\Gamma}}\Big)(z_k)}
\geq\frac{1}{3}\frac{\n{\log z_k}^{q_0+\chi(q_0)-2r_0}}{\n{z_k}^{r_0}}.
\end{equation}

When $p_0=L_\Lambda$, by the definition of $p_0$ and \eqref{Eq16}, it yields that
\begin{equation}
\begin{aligned}
&\mathcal{F}(\mathbold{\zeta},\mathbold{\zeta}',\ldots,\mathbold{\zeta}^{(m)};\mathbold{\Gamma},\mathbold{\Gamma}',\ldots,\mathbold{\Gamma}^{(n)})(z_k)\\
=\,&\mathbold{\Gamma}^{L_\Lambda}(z_k)\mathcal{F}_{L_\Lambda}\Big(\mathbold{\zeta},\mathbold{\zeta}',\ldots,\mathbold{\zeta}^{(m)};
1,\frac{\mathbold{\Gamma}'}{\mathbold{\Gamma}},\ldots,\frac{\mathbold{\Gamma}^{(n)}}{\mathbold{\Gamma}}\Big)(z_k)\neq0\nonumber
\end{aligned}
\end{equation}
for sufficiently large $k$, which contradicts the assumption \eqref{Eq03}.

When $p_0<L_\Lambda$, using the classical result of Titchmarsh \cite[p.151]{Ti}
\begin{equation}
\n{\mathbold{\Gamma}\Big(\frac{3}{4}+\mathtt{i}y\Big)}=\sqrt{2\pi}e^{-\frac{1}{2}\pi\n{y}}\n{y}^{\frac{1}{4}}(1+o(1))\nonumber
\end{equation}
as $\n{y}\to+\infty$ and seeing that $\frac{\n{\log z}^\imath}{\n{z}^\jmath\n{\log z}^{2\jmath}}\to0$ as $\n{z}\to+\infty$ if $\jmath>0$ for nonnegative integers $\imath,\jmath$, one easily observes, in view of \eqref{Eq04}, \eqref{Eq05}, \eqref{Eq07}, \eqref{Eq13} and \eqref{Eq16}, as $k\to+\infty$,
\begin{equation}
\begin{aligned}
&\n{\frac{\mathcal{F}(\mathbold{\zeta},\mathbold{\zeta}',\ldots,\mathbold{\zeta}^{(m)};\mathbold{\Gamma},\mathbold{\Gamma}',\ldots,\mathbold{\Gamma}^{(n)})(z_k)} {\mathbold{\Gamma}^{L_\Lambda}(z_k)}}\\
=&\n{\sum_{p=p_0}^{L_\Lambda}\frac{1}{\mathbold{\Gamma}^{L_\Lambda-p}(z_k)}\mathcal{F}_p\Big(\mathbold{\zeta},\mathbold{\zeta}',\ldots,\mathbold{\zeta}^{(m)};
1,\frac{\mathbold{\Gamma}'}{\mathbold{\Gamma}},\ldots,\frac{\mathbold{\Gamma}^{(n)}}{\mathbold{\Gamma}}\Big)(z_k)}\\
\geq&\,\frac{1}{6}\exp\Big(\frac{L_\Lambda-p_0}{2}\pi\n{y_k}\Big)(\n{y_k}^{\frac{1}{4}}\sqrt{2\pi})^{p_0-L_\Lambda}\frac{\n{\log z_k}^{q_0+\chi(q_0)-2r_0}}{\n{z_k}^{r_0}}\\
&-C_1\exp\Big(\frac{L_\Lambda-p_0-1}{2}\pi\n{y_k}\Big)\n{y_k}^{\frac{p_0+1-L_\Lambda}{4}}\n{\log z_k}^K\to+\infty\nonumber
\end{aligned}
\end{equation}
for a constant $C_1>0$ and an integer $K\geq0$ depending on $n,\Lambda$.
So, again
\begin{equation}
\mathcal{F}(\mathbold{\zeta},\mathbold{\zeta}',\ldots,\mathbold{\zeta}^{(m)};\mathbold{\Gamma},\mathbold{\Gamma}',\ldots,\mathbold{\Gamma}^{(n)})(z_k)\neq0\nonumber
\end{equation}
for sufficiently large $k$, which contradicts the assumption \eqref{Eq03}.

Summarizing all the preceding analyses leads to \eqref{Eq06}, which further implies that the function $\mathcal{F}_p(u_0,u_1,\ldots,u_m;v_0,v_1,\ldots,v_n)$ itself must vanish identically.

In fact, when $p=0$, by definition, $\mathcal{F}_0(u_0,u_1,\ldots,u_m;v_0,v_1,\ldots,v_n)=F(u_0,u_1,\ldots,u_m)$ is a continuous function of $m+1$ variables with constant coefficients.
Therefore, via Voronin \cite{Vo2}, $F(\mathbold{\zeta},\mathbold{\zeta}',\ldots,\mathbold{\zeta}^{(m)})(z)\equiv0$ yields $F(u_0,u_1,\ldots,u_m)\equiv0$ immediately.

From now on, assume $p>0$.
For simplicity, write $p=p_0$, and use the expression \eqref{Eq12} and all its associated notations.
We next prove $G_{q,r}(\vec{u})\equiv0$ for all the indices $q,r$.
To this end, we first show that each one of
\begin{equation}
G_{M_{p_0},0}(\vec{u}),G_{M_{p_0}-1,0}(\vec{u}),\ldots,G_{0,0}(\vec{u})\nonumber
\end{equation}
must be identically equal to zero.
Let's start with $G_{M_{p_0},0}(\vec{u})$, and suppose that $G_{M_{p_0},0}(\vec{u})\not\equiv0$.
Then, \eqref{Eq13}, or its resemblance for this newly chosen $p_0$, holds.
Among all the terms appearing in \eqref{Eq12}, the term
\begin{equation}
\mathbold{\Pi}_{M_{p_0}}(z_k)\mathbold{\psi}^{M_{p_0}+\chi(M_{p_0})}(z_k)G_{M_{p_0},0}(\vec{\mathbold{\zeta}}(z_k))\sim(\log z_k)^{M_{p_0}+\chi(M_{p_0})}\nonumber
\end{equation}
dominates in growth for large $k$, since $\frac{\n{\log z_k}^\imath}{\n{z_k}^\jmath\n{\log z_k}^{2\jmath}}\to0$ when $k\to+\infty$ if $\jmath>0$ for nonnegative integers $\imath,\jmath$.
Thus, analogous to \eqref{Eq16}, we deduce from \eqref{Eq12} and \eqref{Eq13} that
\begin{equation}
\begin{aligned}
&\mathcal{F}_{p_0}(\mathbold{\zeta},\mathbold{\zeta}',\ldots,\mathbold{\zeta}^{(m)};\mathbold{\Gamma},\mathbold{\Gamma}',\ldots,\mathbold{\Gamma}^{(n)})(z_k)\\
=\,&\mathbold{\Gamma}^{p_0}(z_k)\mathcal{F}_{p_0}\Big(\mathbold{\zeta},\mathbold{\zeta}',\ldots,\mathbold{\zeta}^{(m)};
1,\frac{\mathbold{\Gamma}'}{\mathbold{\Gamma}},\ldots,\frac{\mathbold{\Gamma}^{(n)}}{\mathbold{\Gamma}}\Big)(z_k)\neq0\nonumber
\end{aligned}
\end{equation}
for sufficiently large $k$, which, however, contradicts the conclusion \eqref{Eq06}.
As a consequence, one sees $G_{M_{p_0},0}(\vec{u})\equiv0$.
The next term in line is $G_{M_{p_0}-1,0}(\vec{u})$ with
\begin{equation}
\mathbold{\Pi}_{M_{p_0}-1}(z_k)\mathbold{\psi}^{M_{p_0}-1+\chi(M_{p_0}-1)}(z_k)G_{M_{p_0}-1,0}(\vec{\mathbold{\zeta}}(z_k))\sim(\log z_k)^{M_{p_0}-1+\chi(M_{p_0}-1)},\nonumber
\end{equation}
and we deduce that $G_{M_{p_0}-1,0}(\vec{u})\equiv0$ in exactly the same manner.
And so on and so forth, we observe that $G_{q,0}(\vec{u})\equiv0$ for each $q=0,1,\ldots,M_{p_0}$.
Next, after the elimination of $H$ in \eqref{Eq12}, we can repeat the preceding procedure for
\begin{equation}
G_{M_{p_0},1}(\vec{u}),G_{M_{p_0}-1,1}(\vec{u}),\ldots,G_{0,1}(\vec{u})\nonumber
\end{equation}
and observe that $G_{q,1}(\vec{u})\equiv0$ for each $q=0,1,\ldots,M_{p_0}$.
Continuing like this, $G_{q,r}(\vec{u})\equiv0$, and thus $F_{q,r}(\vec{u})\equiv0$ by \eqref{Eq10} and \eqref{Eq11}, for all indices $q,r$; so, $\mathcal{F}_{p_0}(u_0,u_1,\ldots,u_m;v_0,v_1,\ldots,v_n)\equiv0$ follows, which implies $\mathcal{F}(u_0,u_1,\ldots,u_m;v_0,v_1,\ldots,v_n)\equiv0$ in view of \eqref{Eq05}.
\end{proof}

\end{document}